\newtheorem{theorem}{Theorem}[section]
\newtheorem{definition}{Definition}
\newcommand {\Z} {\mathbb Z}
\newcommand {\ZG} {\mathbb {Z}[G]}
\begin{document}

\begin{center}
{\bf \huge Realizing algebraic 2-complexes by cell complexes}

{W. H. Mannan}

\bigskip

{\bf MSC}: {57M20, 20F05, 16E05, 16E10} \hfill {\bf Keywords} 2-complex, Wall's D(2) 

\hfill problem, Realisation problem

\end{center}

\begin{quote}

{\bf Abstract} The realization theorem asserts that for a finitely presented group $G$, the D(2) property and the realization property are equivalent as long as $G$ satisfies a certain finiteness condition.  We show that the two properties are in fact equivalent for all finitely presented groups.

\end{quote}

\section{Introduction}
\label{intro}

An open question in low dimensional topology is: Given a finite cell complex of cohomological dimension $n$ (with respect to all coefficient bundles) must it be homotopy equivalent to a finite $n$-complex (that is a finite cell complex whose cells are all of dimension less than or equal to $n$)?  In 1965 Wall introduced the problem and showed that a counter example must have $n \leq 2$ (see \cite{Wall}).  The Stallings-Swan theorem (proved a few years later - see \cite{Swan}) implied that $n>1$.  So a counter example must be (up to homotopy equivalence) a finite 3-complex of cohomological dimension 2, which is not homotopy equivalent to any finite 2-complex.  The question of whether or not such a complex exists is known as `Wall's D(2) problem'.

This question may be parameterized by fundamental group.  That is, we say a finitely presented group $G$ satisfies the D(2) property if every connected finite 3-complex with fundamental group $G$ is homotopy equivalent to a finite 2-complex.  Wall's D(2) problem then becomes: ``Does every finitely presented group $G$ satisfy the D(2) property?''.

We say a module over $\ZG$ is f.g. free if it has a finite basis and we say a module $S$ is f.g. stably free if $S \oplus F_1 \cong F_2$, where $F_1$ and $F_2$ are f.g. free.

Another property which a finitely presented group $G$ may satisfy is the realization property.

\begin{definition} An algebraic 2-complex over $G$ is an exact sequence of f.g. stably free modules over $\ZG$: $$S_2 \stackrel{\partial_2}\to S_1 \stackrel{\partial_2}\to S_0$$ where coker($\partial_1$)$\cong\Z$.
\end{definition}

We say a finitely presented group $G$ satisfies the realization property if  every algebraic 2-complex over $G$ is `geometrically realizable'.  That is, it is chain homotopy equivalent to $C_*(X)$, the chain complex (over $\ZG$) arising from the universal cover of a finite 2-complex $X$, with $\pi_1(X)=G$.

This property may be stated in purely algebraic terms.  Given a connected finite 2-complex $X$ we may quotient out a maximal tree in its 1-skeleton to attain a Cayley complex $Y$ in the same homotopy type as $X$.  The process of going from a presentation of $G$ to $C_*(Y)$ (where $Y$ is the Cayley complex of the presentation) is a formulaic application of Fox's free differential calculus (see \cite{John}, \S48).  Hence a finitely presented group $G$ satisfies the realization property precisely when every algebraic 2-complex over $G$ arises (up to chain homotopy equivalence) from a finite presentation of $G$.

 Our main result is that the `geometric' D(2) property is equivalent to the `algebraic' realization property:

 \begin{theorem}
\label{mainres}
A finitely presented group $G$ satisfies the D(2) property if and only if it satisfies the realization property.
 \end{theorem}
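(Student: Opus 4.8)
\medskip
\noindent\textbf{Plan of proof.}\quad The plan is to prove the two implications separately, the substantive part being ``D(2)$\,\Rightarrow\,$realization''. Throughout $G$ is finitely presented, so it has finite presentations and presentation $2$-complexes, and any finite generating set of $G$ extends to a finite presentation. For ``realization$\,\Rightarrow\,$D(2)'', let $W$ be a connected finite $3$-complex with $\pi_1(W)\cong G$ and cohomological dimension at most $2$ with respect to all coefficient bundles, and write $C_*(\widetilde W)$ in the form $D_3\xrightarrow{d_3}D_2\to D_1\to D_0$, a complex of finitely generated free $\ZG$-modules with $H_0=\Z$ and $H_1=0$. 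Taking coefficients in $D_3$, the vanishing of $H^3(W;D_3)$ forces $d_3$ to be a split injection, so $C_*(\widetilde W)$ is chain homotopy equivalent to the truncation $P\to D_1\to D_0$ where $P=\mathrm{coker}(d_3)$; since $P\oplus D_3\cong D_2$ with $D_2,D_3$ free, $P$ is finitely generated stably free and this truncation is an algebraic $2$-complex over $G$. By the realization property it is chain homotopy equivalent to $C_*(\widetilde X)$ for a finite $2$-complex $X$ with $\pi_1(X)\cong G$, so $C_*(\widetilde X)\simeq C_*(\widetilde W)$ over $\ZG$; realizing this equivalence by a cellular map $X\to W$ compatible with the identification of fundamental groups (such a map exists because $\dim X=2$) gives a map whose lift to universal covers is a homology isomorphism of simply connected complexes, hence a homotopy equivalence, so $W\simeq X$.

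For ``D(2)$\,\Rightarrow\,$realization'', fix an algebraic $2$-complex $\mathbf E$ over $G$, say $S_2\xrightarrow{\partial_2}S_1\xrightarrow{\partial_1}S_0$. First I would normalize $\mathbf E$ without altering its chain homotopy type. Direct summing with a \emph{contractible} two-term complex $0\to\ZG\xrightarrow{\mathrm{id}}\ZG\to0$ in degrees $(2,1)$ or $(1,0)$ leaves the chain homotopy type unchanged; since the $S_i$ are stably free, finitely many such summands make all three modules free, after which a standard reduction brings the degree-zero module to $\ZG$, and then elementary column operations on $\partial_1$ (carried along on $\partial_2$) together with further contractible summands put $\partial_1$ into the form $(g_1-1,\dots,g_n-1)$ for some generating set $g_1,\dots,g_n$ of $G$ (using only that $I_G$ is the left ideal generated by the elements $g-1$). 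Choosing relators $r_1,\dots,r_k$ so that $\langle g_1,\dots,g_n\mid r_1,\dots,r_k\rangle$ presents $G$, the associated presentation complex $Y_0$ has $C_*(\widetilde{Y_0})$ with this \emph{same} degree-one differential, so the two second differentials are surjections of finitely generated free modules onto the same module $Z=\ker\partial_1$. An elementary argument---lift a basis of one source through the other surjection---shows that after appending enough zero columns to each they become equivalent under the general linear group; since appending a zero column in degree $2$ corresponds to wedging on a copy of $S^2$, it follows that $\mathbf E\oplus(\ZG^{k}\text{ in degree }2)$ is chain homotopy equivalent to $C_*(\widetilde Y)$ for $Y=Y_0\vee\bigvee^{N}S^2$. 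Thus a wedge-$S^2$ stabilization of $\mathbf E$ is already geometrically realizable, and this step uses no finiteness hypothesis.

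The remaining, decisive step is to remove these $S^2$-summands---which is where earlier arguments invoked the finiteness condition, and which I expect to be the main obstacle. Here is the intended route. Since $\pi_2(Y)\cong H_2(\widetilde Y)\cong H_2(\mathbf E)\oplus\ZG^{k}$, the module $\pi_2(Y)$ has a free direct summand $\ZG^{k}$; representing its generators by maps $S^2\to Y$ and attaching $k$ three-cells along them yields a connected finite $3$-complex $W$ with $\pi_1(W)\cong G$. A mapping-cone computation identifies $C_*(\widetilde W)$, up to chain homotopy, with the cone of the split inclusion of that summand $\ZG^{k}$ into $\mathbf E\oplus(\ZG^{k}\text{ in degree }2)$, hence with $\mathbf E$ itself. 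Consequently $C_*(\widetilde W)$ is chain homotopy equivalent to the length-$2$ complex $\mathbf E$ of finitely generated projective (indeed stably free) $\ZG$-modules, so $H^m(W;M)=0$ for all $m\ge3$ and all coefficient bundles $M$: the $3$-complex $W$ is \emph{automatically} of cohomological dimension at most $2$, with no control needed on the stably free modules appearing. Applying the D(2) property to $W$ gives a finite $2$-complex $X'\simeq W$, whence $C_*(\widetilde{X'})\simeq C_*(\widetilde W)\simeq\mathbf E$ and $\mathbf E$ is geometrically realizable.

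The crux, then, is the observation in the last paragraph: a $3$-complex obtained by killing the spurious free summand of $\pi_2$ of a geometric model for $\mathbf E\oplus(\text{free in degree }2)$ is forced to have cohomological dimension $\le2$ purely because its chain complex is chain homotopy equivalent to the length-$2$ complex $\mathbf E$, and that is precisely the hypothesis under which the D(2) property bites; no finiteness condition on $G$ enters. The points that will require care, all of them routine on this approach, are: the bookkeeping in the normalization of $\partial_1$ and $\partial_2$ (in particular that contractible summands and elementary column operations suffice); the fact that a $\ZG$-chain homotopy equivalence between the chain complex of a finite $2$-complex and that of a finite complex over $G$ is induced by an actual map; and the verification that the mapping-cone computation identifies $C_*(\widetilde W)$ with $\mathbf E$ up to chain homotopy exactly.
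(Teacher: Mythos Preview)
Your overall strategy coincides with the paper's: for D(2)$\Rightarrow$realization you first show that some $S^2$-stabilization of the given algebraic $2$-complex is realized by a finite $2$-complex $Y$, then attach $3$-cells to $Y$ killing exactly the spurious free summand of $\pi_2(Y)$ so that the resulting finite $3$-complex $W$ has $C_*(\widetilde W)\simeq\mathbf E$, and finally invoke D(2). The paper's construction of $W$ is literally the same (extend $\mathcal A$ by a contractible piece in degrees $3$--$2$, transport through a chain equivalence to $C_*(Y_2)$, and attach $3$-cells along the images of a basis). For realization$\Rightarrow$D(2) the paper simply cites Johnson's Weak Realization Theorem, while you sketch that argument directly.

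The one genuine difference is how the stabilization step is obtained. The paper invokes an external ``eventual stability'' theorem (\cite{Mann}, Theorem~1.1) as a black box to conclude that, after adding enough free summands in degree~$2$, any algebraic $2$-complex becomes chain homotopy equivalent to $C_*$ of a Cayley complex. You instead prove this in place: normalize $\partial_1$ to $(g_1-1,\dots,g_n-1)$ by contractible summands and column operations, choose a finite presentation on those generators, and then use the Schanuel-type trick (lifting bases through the two surjections onto $\ker\partial_1$) to make the two second differentials $GL$-equivalent after padding with zero columns. This is more elementary and self-contained---it avoids importing the general result on truncated projective resolutions---at the cost of the bookkeeping you flag (the reduction of $S_0$ to $\ZG$ and of $\partial_1$ to standard form). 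The paper's route is shorter precisely because it outsources that algebra; yours would make the paper independent of \cite{Mann}.
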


In one direction this is proved in \cite{John}, Appendix B:

\bigskip
\noindent {\bf Weak Realization Theorem:} The realization property for $G$ implies the D(2) property for $G$.

\bigskip
In the other direction, it is sufficient to show that every algebraic 2-complex $\mathcal{A}$ over $G$ is chain homotopy equivalent to $C_*(Y)$ for some finite cell complex $Y$.  Then the D(2) property for $G$ would imply that $Y$ was homotopy equivalent to a finite 2-complex $X$, which would geometrically realize $\mathcal{A}$.

Johnson does this in \cite{John}, Appendix B, subject to the existence of a truncated resolution:
$$
F_3 \to F_2 \to F_1 \to F_0\to \Z \to 0$$
where the $F_i$ are f.g. free modules over $\ZG$.  In the next section we use a stronger ``eventual stability'' result (\cite{Mann}, theorem 1.1), to show it may be done for any finitely presented group $G$.

\section{Realizing algebraic 2-complexes by finite cell complexes}

 Let $G$ be a finitely presented group and let $\mathcal{A}$ denote following the algebraic 2-complex over $G$:
$$ S_2 \stackrel{d_2}\to S_1 \stackrel{d_1}\to S_0$$

\begin{theorem} We may realize $\mathcal{A}$ up to chain homotopy equivalence by a finite geometric 3-complex. \end{theorem}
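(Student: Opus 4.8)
The plan is to build a finite 3-complex whose cellular chain complex is chain homotopy equivalent to $\mathcal{A}$, by grafting the algebraic data of $\mathcal{A}$ onto a geometric model coming from a finite presentation of $G$. First I would invoke the eventual stability result of \cite{Mann}, theorem 1.1, to produce a truncated free resolution
$$F_3 \stackrel{\partial_3}\to F_2 \stackrel{\partial_2}\to F_1 \stackrel{\partial_1}\to F_0 \to \Z \to 0$$
over $\ZG$ with each $F_i$ finitely generated free. From a finite presentation of $G$ we get a finite $2$-complex $X_0$ with $C_*(\widetilde{X_0})$ a complex of f.g. free modules computing the low-dimensional homology of the point; the first three terms of the resolution above can be arranged (after stabilizing, i.e. adding free summands corresponding to extra trivial $2$- and $3$-cells, and using Schanuel-type arguments) to agree with $C_2(\widetilde{X_0}) \to C_1(\widetilde{X_0}) \to C_0(\widetilde{X_0})$, with $F_3$ mapping onto the kernel of $\partial_2$. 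Attaching $\mathrm{rk}(F_3)$ trivial $3$-cells along these relations yields a finite $3$-complex $P$ with $C_*(\widetilde P) = (F_3 \to F_2 \to F_1 \to F_0)$, a finite free resolution truncated in degree $3$.

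Next I would use the algebraic 2-complex $\mathcal{A}$ together with this resolution to manufacture the target complex. Since $\mathrm{coker}(d_1) \cong \Z$ and $\mathcal{A}$ is exact, $\mathcal{A}$ together with its augmentation $S_0 \to \Z$ is the beginning of a (stably free) resolution of $\Z$; by the generalized Schanuel lemma its truncation is stably equivalent, as a chain complex, to the truncation $F_2 \to F_1 \to F_0$ after adding elementary complexes of the form $0 \to F \stackrel{\mathrm{id}}\to F \to 0$ in appropriate degrees. Concretely, there is a chain homotopy equivalence between $\mathcal{A} \oplus (\text{trivial stuff})$ and $(F_2' \to F_1' \to F_0')$ for suitable f.g. free $F_i'$, realized by explicit chain maps. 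The point of passing through the geometric model $P$ is that its chain complex \emph{is} already geometrically realized: $C_*(\widetilde P)$ is the chain complex of an honest finite $3$-complex.

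The construction then proceeds by modifying $P$ cell-by-cell to change its chain complex within its chain homotopy class to one chain homotopy equivalent to $\mathcal{A}$. The standard toolkit here is: (i) adding a cancelling pair of $i$- and $(i+1)$-cells changes $C_*(\widetilde P)$ by an elementary expansion, which is a geometric move (an elementary expansion of the complex); (ii) sliding cells, i.e. changing an attaching map by a based homotopy, realizes the operation of adding a multiple of one basis element to another in the boundary matrix; and (iii) since all the stable-free ambiguity in $\mathcal{A}$ ($S_i \oplus F_1^{(i)} \cong F_2^{(i)}$) can be absorbed by such elementary expansions in dimensions $\le 3$, one never needs cells above dimension $3$. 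I would carry out these moves to pass from $P$ to a finite $3$-complex $Y$ with $C_*(\widetilde Y)$ chain isomorphic to $\mathcal{A}$ up to the addition of elementary complexes, hence chain homotopy equivalent to $\mathcal{A}$.

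The main obstacle I anticipate is bookkeeping the stable-free modules: $\mathcal{A}$ consists of stably free, not free, modules, so one must choose the free complements $F_1^{(i)}$ compatibly in all three degrees and check that the resulting ``stabilized'' complex is still exact with cokernel $\Z$ and still obtainable from a genuine $3$-complex by elementary geometric expansions — in particular that the chain homotopy equivalence to the resolution $F_* \to \Z$ can be taken to consist of maps that lift to cellular maps after enough expansions. This is exactly the place where Johnson's argument needed the \emph{a priori} existence of the truncated free resolution; the content of the present theorem is that \cite{Mann}, theorem 1.1 supplies it unconditionally, and the remaining work is the (now routine, but careful) translation of elementary chain operations into elementary homotopy-theoretic moves on the finite $3$-complex.
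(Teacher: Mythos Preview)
Your proposal has a genuine gap at the very first step. You write that you would ``invoke the eventual stability result of \cite{Mann}, theorem 1.1, to produce a truncated free resolution $F_3 \to F_2 \to F_1 \to F_0 \to \Z \to 0$'' with each $F_i$ f.g.\ free. That theorem does no such thing. It is a statement about truncated projective resolutions \emph{of a fixed length}: two partial resolutions of $\Z$ of the same length over $\ZG$ become chain homotopy equivalent once suitably stabilized by free summands. It cannot manufacture an extra term $F_3$, and in fact no theorem can do so unconditionally: the existence of a f.g.\ free $F_3$ surjecting onto $\ker\partial_2$ is exactly the condition that $G$ be of type $FP_3$, and there are finitely presented groups (Stallings' group, for instance) which are not $FP_3$. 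Your model complex $P$ therefore fails to exist precisely for the groups where Johnson's original argument already failed; you have reinstated his hypothesis rather than removed it, and the rest of your outline (the Schanuel comparisons, the cell-slides) simply replays Johnson's proof under that hypothesis.

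The paper's argument sidesteps this by never asking for a degree-$3$ term in any resolution. It applies \cite{Mann}, theorem 1.1, at length two: after stabilizing both $\mathcal{A}$ and the cellular chain complex $C_*(Y_1)$ of a presentation Cayley complex by matching free summands in degree $2$, one gets a chain homotopy equivalence $\phi_*$ between the two stabilized algebraic $2$-complexes. The $3$-cells of the desired $3$-complex $Y$ are then attached, via Hurewicz, along the $\phi_2$-images of a basis for the free stabilizing summand --- not along generators of $\ker\partial_2$. The resulting $C_*(Y)$ is chain homotopy equivalent to $\mathcal{A}$ with an elementary expansion in degrees $2$ and $3$ adjoined, hence to $\mathcal{A}$. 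The point you are missing is that the $3$-cells only need to record the stabilization data, not a step of a resolution.
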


\begin{proof}  Let $Y_1$ denote the Cayley complex of a finite presentation of $G$.  Then $C_*(Y_1)$ is an algebraic $2$-complex:
$$ C_2 \stackrel{\partial_2}\to C_1 \stackrel{\partial_1}\to C_0$$ and the $C_i$ are f.g. free modules over $\ZG$.

\bigskip
Let $C\cong S_2 \oplus C_1 \oplus S_0$ and let $S\cong C_2 \oplus S_1 \oplus C_0$.  Also let $Q$ be a f.g. free module such that $C \oplus Q$ and $S \oplus Q$ are f.g. free.  Let $\vec{e_1}, \cdots, \vec{e_n}$ denote a basis for $C \oplus Q$ and let $Y_2$ denote the wedge of $Y_1$ with $n$ $2$-spheres.  Hence $C_*(Y_2)$ is:

$$ C_2 \oplus (C \oplus Q) \stackrel{\partial_2 \oplus 0}\to C_1 \stackrel{\partial_1}\to C_0$$

Let
$$
\mathcal{A}' = S\oplus Q \stackrel{\iota}\to S_2 \oplus (S \oplus Q) \stackrel{d_2\oplus 0}\to S_1 \stackrel{d_1}\to S_0
$$ where $\iota$ denotes the natural inclusion into the second summand.  The natural inclusion \newline $\mathcal{A} \hookrightarrow \mathcal{A}'$ is a simple homotopy equivalence.

By \cite{Mann}, theorem 1.1 we have a chain homotopy equivalence:
$$\left\{  S_2 \oplus S  \stackrel{d_2\oplus 0}\to S_1 \stackrel{d_1}\to S_0 \right\} \tilde{\to} \left\{C_2 \oplus C  \stackrel{\partial_2 \oplus 0}\to C_1 \stackrel{\partial_1}\to C_0\right\}
$$
By extending this to the identity on $Q$ we get a chain homotopy equivalence:
\begin{eqnarray*}
S_2 \oplus (S \oplus Q) \,\stackrel{d_2\oplus 0}\to S_1 \,\, \stackrel{d_1}\to \,\,S_0\,\quad\\
\downarrow{\phi_2} \quad \qquad \qquad \downarrow{\phi_1}\,\, \quad \downarrow{\phi_0} \\
 C_2 \oplus (C \oplus Q) \stackrel{\partial_2 \oplus 0}\to C_1 \,\, \stackrel{\partial_1}\to \,\, C_0\quad
\end{eqnarray*}

We pick a basis $\vec{f_1}, \cdots, \vec{f_m}$ for $S \oplus Q$.    Then for each $i$ we have $\phi_2 \iota \vec{f_i} \in H_2(\tilde{Y_2};\Z)$.  The Hurewicz isomorphism theorem implies an isomorphism  $h: H_2(\tilde{Y_2};\Z) \tilde{\to} \pi_2(\tilde{Y_2})$ and the covering map induces an isomorphism $p:\pi_2(\tilde{Y_2}) \tilde{\to}  \pi_2(Y_2)$.
\newline

For each $i \in \{1, \cdots, m\}$, let $E_i$ be a 3-ball and let $\psi_i:\partial E_i \to Y_2$ denote a map representing the homotopy element $ph\phi_2\iota\vec{f_i}$, where we identify $S^2$ with $\partial E_i$.

 Let $Y$ denote the 3-complex constructed by attaching the 3-cells $E_i$ to $Y_2$ via the attaching maps $\psi_i$.  Then $C_*(Y)$ is given by:$$
S\oplus Q \stackrel{\phi_2\iota}\to C_2 \oplus (C \oplus Q) \stackrel{\partial_2 \oplus 0}\to C_1 \,\, \stackrel{\partial_1}\to \,\, C_0\quad$$

We now have a chain homotopy equivalence $\mathcal{A}' \tilde{\to} C_*(Y)$:

\begin{eqnarray*}
S\oplus Q \, \stackrel{\iota}\to S_2 \oplus (S \oplus Q) \,\stackrel{d_2\oplus 0}\to S_1 \,\, \stackrel{d_1}\to \,\,S_0\,\quad\\
\downarrow 1 \,\,\quad \qquad \downarrow{\phi_2} \quad \qquad \qquad \downarrow{\phi_1}\,\, \quad \downarrow{\phi_0} \\
S\oplus Q \stackrel{\phi_2\iota}\to C_2 \oplus (C \oplus Q) \stackrel{\partial_2 \oplus 0}\to C_1 \,\, \stackrel{\partial_1}\to \,\, C_0\quad
\end{eqnarray*}
Thus $\mathcal{A}$ is geometrically realized by $Y$.
\end{proof}

In particular, if the D(2) property holds for $G$, then we may find a finite 2-complex, $X$ in the homotopy type of $Y$, as $Y$ is cohomologically 2-dimensional (the cohomology of $Y$ is an invariant of the chain homotopy type of $C_*(Y)$).  Thus $\mathcal{A}$ is geometrically realized by the 2-complex $X$.

Hence the D(2) property for $G$ implies the realization property for $G$.  From the Weak Realization Theorem (\cite{John}, Appendix B) we know that the realization property for $G$ implies the D(2) property for $G$.  Our proof of theorem 1.1 is therefore complete.

   \noindent W. H. Mannan\\
    J8 Hicks Building\\
    Department of Pure Mathematics\\
    University of Sheffield\\
    Hounsfield Road\\
    Sheffield S3 7RH\\
    Telephone: (0114) 2223724\\
   email:{wajid@mannan.info\\

\end{document}